\title{{\large \textbf{Permanent properties of amenable, transitive and faithful actions}}}
\author{Soyoung Moon\footnote{This work is supported by Swiss NSF grant $20-126689/1$.}}
\date{\today}
\newcommand{\supp}{\textrm{supp}}
\newcommand{\Ker}{\textrm{Ker}}
\theoremstyle{plain}
\newtheorem{thm}{Theorem}
\newtheorem{prop}[thm]{Proposition}
\newtheorem{cor}[thm]{Corollary}
\newtheorem{lem}[thm]{Lemma}
\theoremstyle{definition}
\newtheorem{defn}{Definition}[section]
\newtheorem{rem}{Remark}[section]
\newtheorem{ex}{Example}[section]
\long\def\symbolfootnote[#1]#2{
\begingroup
\def\thefootnote{\fnsymbol{footnote}}\footnote[#1]{#2}
\endgroup}
\begin{document}

\maketitle

\smallskip

\begin{abstract}
We study hereditary properties of the class of countable groups admitting an amenable, transitive and faithful action on a countable set. We consider mainly the case of amalgamated free products, and we show in particular that the double of amenable groups and the amalgamated free products of two amenable groups over a finite subgroup admit such actions.
\end{abstract}

\bigskip

\pagestyle{headings}

\section{Introduction}

Let $G$ be a countable group acting on a countable set $X$.

\begin{defn}
The action of $G$ on $X$ is \textit{amenable} if there exists a $G$-invariant mean on $X$, i.e. a map $\mu : 2^X=\mathcal{P}(X)
\rightarrow [0,1]$ such that $\mu(X)=1 $, $\mu (A \cup B)= \mu(A)+\mu(B)$ for every pair of disjoint subsets $A$, $B$ of $X$, and
$\mu(gA)=\mu(A)$, $\forall g \in G$, $\forall A \subseteq X$.
\end{defn}
A group is amenable if the action on itself by left multiplication is amenable.

\smallskip

The above definition is equivalent to the existence of a \textit{F\o lner sequence}, i.e., a sequence $\{A_n\}_{n\geq 1}$ of finite non-empty subsets of $X$ such that for every $g\in G$, one has
$$ \lim_{n\rightarrow \infty}\frac{|A_n \vartriangle g\cdot A_n|}{|A_n|}=0. $$

The class $\mathcal{A}$ of countable groups admitting an amenable, transitive and faithful action on a countable set is introduced by Glasner and Monod in \cite{GlMo}.
 Whilst the class $\mathcal{A}$ is closed under direct products, free products and extension of co-amenable subgroups\symbolfootnote[2]{A subgroup $H<G$ is \textit{co-amenable} in $G$ if the $G$-action on $G/H$ is amenable.} (Proposition 1.7. in \cite{GlMo}), it is not closed under amalgamated free products even if two factors are amenable groups. Indeed, viewing the group $SL_2(\mathbb{Z})$ as an amalgamated free product
$$
SL_2(\mathbb{Z})\simeq (\mathbb{Z}/6\mathbb{Z})\ast_{(\mathbb{Z}/2\mathbb{Z})}(\mathbb{Z}/4\mathbb{Z}),
$$
one can see the group $SL_2(\mathbb{Z}) \ltimes \mathbb{Z}^2$ as the amalgamated free product $G\ast_A H$ of $G=\mathbb{Z}/4\mathbb{Z} \ltimes \mathbb{Z}^2$ and $H=\mathbb{Z}/6\mathbb{Z} \ltimes \mathbb{Z}^2$ along $A=\mathbb{Z}/2\mathbb{Z} \ltimes \mathbb{Z}^2$. Notice that the three groups $G$, $H$ and $A$ are in $\mathcal{A}$ since they are amenable, but the amalgamated free product is not in $\mathcal{A}$ since the pair $(SL_2(\mathbb{Z}) \ltimes \mathbb{Z}^2, \mathbb{Z}^2)$ has relative Property (T) and $\mathbb{Z}^2$ does not have finite exponent (it follow from Lemma 4.3. in \cite{GlMo} that if $G\in \mathcal{A}$ and the pair $(G,H)$ has relative Property (T), then $H$ has finite exponent, i.e. there is an integer $n$ such that for every $h\in H$, $h^n=1$).

\smallskip

The purpose of this paper is to investigate some hereditary properties of the class $\mathcal{A}$ in case of amalgamated free products. Our first result is (see Proposition \ref{DoubleAmen}):
\begin{thm}\label{thm1}
Let $G$ be a group such that $G$ surjects onto an amenable group $H$. If $A$ is a common subgroup of $G$ and $H$ such that $\pi|_A$ is injective and $[H:A]\geq 2$, then the amalgamated free product $G \ast_A H$ is in $\mathcal{A}$. In particular, if $G$ is amenable, the double of $G$ over $A$ is in $\mathcal{A}$, for every subgroup $A$ of $G$.
\end{thm}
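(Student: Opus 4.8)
The plan is to produce a normal subgroup $N \trianglelefteq \Gamma$, where $\Gamma := G \ast_A H$, that is co-amenable in $\Gamma$ and itself belongs to $\mathcal{A}$; the conclusion then follows from the closure of $\mathcal{A}$ under extensions of co-amenable subgroups (Proposition 1.7 in \cite{GlMo}). The natural candidate is the kernel of the ``folding'' homomorphism $\Phi : \Gamma \to H$ determined by $\Phi|_{G} = \pi$ and $\Phi|_{H} = \mathrm{id}_{H}$. This is well defined because the amalgamation glues $a \in A \leq G$ to $\pi(a) \in H$, so the two prescriptions agree on $A$; it is surjective since $\Phi|_{H}$ already is. Setting $N := \Ker \Phi$, we have $\Gamma / N \cong H$, which is amenable, so $N$ is co-amenable in $\Gamma$: a left-invariant mean on $H$ pulls back through $\Phi$ to a $\Gamma$-invariant mean on $\Gamma/N$.

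The heart of the argument is to identify $N$. I would let $N$ act on the Bass--Serre tree $T$ of the amalgam, whose vertices are $\Gamma/G \sqcup \Gamma/H$ and whose edges are $\Gamma/A$, and read off the induced graph of groups $N \backslash T$. Because $\Phi|_{G}$ and $\Phi|_{H}$ are onto $H$, we have $\Gamma = N G = N H$, so $N$ is transitive on each vertex type; the corresponding stabilisers are $N \cap G = \Ker \pi$ at a $G$-vertex and $N \cap H = \{1\}$ at an $H$-vertex. Computing $N \backslash \Gamma / A$ through $\Phi$ yields $[H:A]$ orbits of edges, and every edge stabiliser is trivial since, $N$ being normal, $N \cap \gamma A \gamma^{-1} = \gamma\, \Ker(\pi|_{A})\, \gamma^{-1} = \{1\}$. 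This last step is exactly where the injectivity of $\pi|_{A}$ enters, while the hypothesis $[H:A] \geq 2$ guarantees that the amalgam is non-degenerate so that $T$ is a genuine tree. With all edge groups trivial, Bass--Serre theory presents $N$ as a free product with no amalgamation, namely $N \cong \Ker \pi \ast F_{[H:A]-1}$, where $F_{r}$ denotes the free group of rank $r$ (of infinite rank when $[H:A] = \infty$).

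It remains to feed this into the closure properties. Free groups lie in $\mathcal{A}$, being free products of copies of $\mathbb{Z}$, and $\Ker \pi$ lies in $\mathcal{A}$ as well---this is automatic when $G$ is amenable, since then $\Ker \pi$ is amenable, and is the point at which the hypothesis on $G$ is used. Closure of $\mathcal{A}$ under free products then gives $N \in \mathcal{A}$, and closure under extensions of co-amenable subgroups upgrades this to $\Gamma \in \mathcal{A}$. For the double one takes $H = G$ and $\pi = \mathrm{id}$, so that $\Ker \pi = \{1\}$, $N \cong F_{[G:A]-1}$ is free, and $\Gamma/N \cong G$ is amenable precisely because $G$ is; the same two closure properties conclude.

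I expect the main obstacle to be the Bass--Serre bookkeeping of the middle paragraph: one must verify carefully, via the double-coset description of the orbits of $N$ on the vertices and edges of $T$, that the vertex stabilisers are exactly $\Ker \pi$ and $\{1\}$ and that all edge stabilisers vanish, and then assemble the resulting graph of trivial-edge groups into the stated free-product decomposition. Everything else is a formal consequence of Proposition 1.7.
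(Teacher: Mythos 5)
Your identification of $N=\Ker\Phi$ and its Bass--Serre decomposition is correct and essentially the paper's argument: the paper uses the same folding homomorphism, notes that $N$ meets $A$ and all its conjugates trivially, and writes $N\cong K\ast(\ast_i H_i)$ with $K$ free of rank equal to the first Betti number of the quotient graph, which is exactly your $\Ker\pi\ast F_{[H:\pi(A)]-1}$. The co-amenability of $N$ via the amenability of $\Gamma/N\cong H$ is also identical.

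The gap is in your final paragraph. The theorem does \emph{not} assume that $G$ is amenable --- only that it surjects onto the amenable group $H$ --- so $\Ker\pi$ is an arbitrary countable group and there is no reason for it to lie in $\mathcal{A}$. You cannot therefore invoke closure of $\mathcal{A}$ under free products to get $N=\Ker\pi\ast F_{[H:\pi(A)]-1}\in\mathcal{A}$; your argument as written only covers the cases where $G$ is amenable or $\pi$ is injective (in particular the double), not the general statement. The paper closes this with a different ingredient: Theorem 1.5 of Glasner--Monod, which asserts that a non-trivial free product $G_1\ast G_2$ in which at least one factor is a free group lies in $\mathcal{A}$ with \emph{no hypothesis on the other factor}. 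This is also where $[H:\pi(A)]\geq 2$ really enters: it forces the quotient graph $N\backslash T$ to have at least two edge orbits between its two vertex orbits, hence a circuit, hence a non-trivial free factor $F_{[H:\pi(A)]-1}$, so that the Glasner--Monod theorem applies to $N$. In your write-up the index hypothesis is used only for ``non-degeneracy of the amalgam,'' which signals that this key point was missed; without the Glasner--Monod input the general case does not follow.
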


In case of double of free groups on two generators over a cyclic subgroup $\mathbb{F}_2 \ast_{\mathbb{Z}}\mathbb{F}_2$, it was proven in \cite{Moon} that any finite index subgroup of such group is in $\mathcal{A}$. Let us mention that this result was generalized to amalgamated free products of any free groups over a cyclic subgroup (called cyclically pinched one-relator groups) $\mathbb{F}_n \ast_{\mathbb{Z}}\mathbb{F}_m$, $\forall n$, $m\geq 2$ in \cite{Moon2}.

\smallskip
Our second theorem concerns amalgamated free products over a finite subgroup (see Proposition \ref{A'}). In particular we show:

\begin{thm}\label{thm2}
Let $G$ and $H$ be countable groups and $A$ be a common finite subgroup of $G$ and $H$. If $G$ is an infinite amenable group and there is a $H$-set $Y$ such that the $H$-action is amenable and the action of $A$ on $Y$ is free, then $G\ast_A H$ is in $\mathcal{A}$.
\end{thm}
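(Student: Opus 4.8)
The plan is to reduce the assertion to a statement about subgroups and then to build the required action by adapting the free-product construction of Glasner and Monod (Proposition 1.7 in \cite{GlMo}) to the amalgam over the finite group $A$. Since a transitive action $\Gamma\curvearrowright\Gamma/K$ of $\Gamma:=G\ast_A H$ is faithful precisely when $K$ is core-free and amenable precisely when $K$ is co-amenable, the goal becomes the production of a co-amenable, core-free subgroup $K<\Gamma$. I would record the amenable data on the two sides in a form symmetric across $A$: on the $G$-side, amenability of $G$ furnishes an invariant mean on any transitive $G$-set, and since $A$ is finite it embeds into a transitive amenable $G$-set on which $A$ acts freely; on the $H$-side I keep $Y$ together with its invariant mean and the free $A$-orbits supplied by the hypothesis. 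The point of the free $A$-action on both sides, together with finiteness of $A$, is that the two factor actions can be glued along isomorphic free $A$-sets, which is exactly the datum the amalgamation over $A$ demands.

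With this in place I would form the transitive $\Gamma$-space $X$ as the $\Gamma$-orbit of a base point in the ``free product of the pointed $A$-spaces'', the amalgamated analogue of the Glasner--Monod construction: one glues the chosen amenable $G$-set over the $G$-vertices of the Bass--Serre tree $T$ and copies of $Y$ over the $H$-vertices, identifying a free $A$-orbit on each side along every edge. This presents $X$ as $\Gamma/K$ with $K$ the resulting amalgam of the two base-point stabilizers, so transitivity holds by construction, and it comes with a $\Gamma$-equivariant projection $X\to T$ recording tree position.

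The heart of the argument, and the step I expect to be the main obstacle, is amenability of $\Gamma\curvearrowright X$, i.e. co-amenability of $K$. It is essential here that one does \emph{not} simply average over balls of $T$: the tree is a non-amenable $\Gamma$-space, and naively fibering an amenable action over it destroys invariance. Instead I would construct the invariant mean on $X$ directly from the means of $G$ and of $H\curvearrowright Y$, following Glasner--Monod, by spreading the mass along the fibres so that a generator of a factor perturbs only the first syllable of a point. The new feature relative to the free-product case is the amalgamation: the corrections introduced at the junctions are governed by $A$, and it is precisely finiteness of $A$ that keeps these corrections finite and uniformly bounded, while freeness of the $A$-action on $Y$ guarantees the junctions are genuinely free and contribute no fixed mass. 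Pushing the Glasner--Monod estimate through these finitely many $A$-corrections is the technical crux.

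Finally, faithfulness would follow from the equivariant projection $X\to T$: any element acting trivially on $X$ acts trivially on $T$, hence lies in $\Ker(\Gamma\curvearrowright T)=\bigcap_{\gamma\in\Gamma}\gamma A\gamma^{-1}=:A_0$, a finite normal subgroup of $\Gamma$ contained in $A$. Since $A$, and therefore $A_0$, acts freely on the glued orbit inside $X$, the only element of $A_0$ fixing that orbit is the identity, so the action is faithful. Transitivity, amenability and faithfulness together exhibit $G\ast_A H$ as a member of $\mathcal{A}$.
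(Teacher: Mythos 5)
Your opening reduction (transitive actions are $\Gamma/K$, faithful iff $K$ is core-free, amenable iff $K$ is co-amenable) and your faithfulness argument via the kernel of the action on the Bass--Serre tree are both fine. The fatal problem is the amenability step, and it is not a technical crux that sharper Glasner--Monod-style estimates could resolve: the space $X$ you construct cannot carry a $\Gamma$-invariant mean at all. By construction every point of $X$ sits over a vertex of the tree $T$ (or over an edge, if it lies in one of the glued $A$-orbits), so $X$ admits a $\Gamma$-equivariant map onto a subset of $\Gamma/G\sqcup\Gamma/H\sqcup\Gamma/A$. An invariant mean on $X$ would push forward to an invariant mean there, hence, after normalising the piece of positive mass, to an invariant mean on $\Gamma/G$, $\Gamma/H$ or $\Gamma/A$. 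For a non-elementary amalgam over a finite subgroup (which is the case here whenever $H\neq A$, since $[G:A]=\infty$) a Schottky free subgroup $F<\Gamma$ acts freely on the vertices and edges of $T$, so each of these three coset spaces is a free $F$-set and admits no $F$-invariant mean, let alone a $\Gamma$-invariant one. No redistribution of mass within the fibres can evade this pushforward obstruction. The simplest instance already exhibits the collapse: for $G=H=\mathbb{Z}$ and $A=\{1\}$, the free product of the two pointed regular actions is precisely the regular action of $\mathbb{F}_2$ on itself --- transitive and faithful, but not amenable. This is exactly why Glasner and Monod do \emph{not} prove closure under free products by a tree-of-spaces construction.

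The missing idea is to realise both factors on the \emph{same} countable set and perturb one of them generically, rather than to fibre anything over $T$. The paper takes $X=G$ with the regular $G$-action, takes a suitable $H$-set ($H\sqcup$ infinitely many copies of $Y$) on which $A$ still acts freely, identifies the two free $A$-actions, and then works in the Baire space $Z=\{\sigma\in Sym(X)\mid \sigma a=a\sigma,\ \forall a\in A\}$: for a generic $\sigma\in Z$ the action of $G\ast_A H^{\sigma}$ on $X$ is faithful, and one can simultaneously force $\sigma(C_{n_k})=D_{n_k}$ along a subsequence of F\o lner sets chosen in advance to have equal cardinalities and pairwise disjoint $A$-saturations (Lemmas \ref{cardFolner} and \ref{cardFolner2}); then the $G$-F\o lner sequence is also F\o lner for $H^{\sigma}$, hence for the whole amalgam, and transitivity is automatic because $G$ already acts transitively on itself. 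The hypotheses you did isolate correctly --- finiteness of $A$ and freeness of the $A$-actions --- are used there to match $A$-orbits when defining the approximating permutations $\sigma'$, not to control corrections at tree junctions. Your construction should be discarded in favour of this generic-permutation mechanism.
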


As corollaries, the amalgamated free product of an infinite amenable group and a residually finite group over a finite subgroup (for instance $SL_3(\mathbb{Z})\ast_A G$ with any infinite amenable group $G$ and a common finite subgroup $A$) is in $\mathcal{A}$ (Corollary \ref{AmenResid}); and the amalgamated free product of two amenable groups over a finite subgroup is also contained in $\mathcal{A}$ (Corollary \ref{Amalgam-finite-sg}).
\bigskip

\noindent \textbf{Acknowledgement.} I would like to thank Nicolas Monod for useful discussions and Alain Valette for his valuable comments on a previous version of this paper.

\section{Hereditary properties of $\mathcal{A}$}

\subsection{Double of amenable groups}
An amalgamated free product $G\ast_A H$ is called \textit{double of $G$ over $A$} if $H$ is isomorphic to $G$ and the amalgamated subgroup of $H$ is given by the isomorphism. Such a group has a presentation $\langle G, \phi(G)|a=\phi(a)$, $\forall a\in A\rangle$ where $\phi:G\rightarrow H$ is an isomorphism. As mentioned above, in general the amalgamated free product is not in $\mathcal{A}$ even if the factors are amenable groups. But it is true if the amalgam is a double; or more generally, if one factor surjects onto the other factor which is amenable with some extra condition (Theorem \ref{thm1}):

\begin{prop}\label{DoubleAmen}
Let $H$ be an amenable group and let $\pi: G \twoheadrightarrow H$ be a group epimorphism and let $A<G$ be a subgroup such that $\pi|_A$ is injective and $[H:\pi(A)]\geq 2$. Then the amalgamated free product $G \ast_A H$ given by $\pi$, i.e.
$$
G\ast_A H =\langle G, H | \pi(a)=a, \forall a\in A\rangle
$$
is in $\mathcal{A}$. In particular, if $G$ is amenable, the double of $G$ over $A$ is in $\mathcal{A}$.
\end{prop}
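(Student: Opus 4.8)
The plan is to realise $\Gamma := G\ast_A H$ as an extension of a co-amenable subgroup that already lies in $\mathcal A$, and then quote the permanence properties recalled in the introduction. First I would build a retraction onto the amenable factor. Define $\Phi\colon \Gamma \to H$ by $\Phi|_H = \mathrm{id}_H$ and $\Phi|_G = \pi$. For every $a\in A$ the two prescriptions agree, because in the amalgam $a$ is identified with $\pi(a)$ and $\Phi(\pi(a)) = \pi(a) = \Phi(a)$; hence the universal property of the amalgamated free product makes $\Phi$ a well-defined homomorphism. It is onto (already $\Phi|_H$ is) and splits the inclusion $H\hookrightarrow\Gamma$, so, setting $N := \ker\Phi$, we obtain $\Gamma = N\rtimes H$ with $\Gamma/N\cong H$.

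Since $H$ is amenable, $\Gamma/N$ is amenable, i.e. $N$ is co-amenable in $\Gamma$; by Proposition 1.7 of \cite{GlMo} (closure of $\mathcal A$ under extensions of co-amenable subgroups) it therefore suffices to show $N\in\mathcal A$. This is where the hypotheses on $A$ are used, and I would extract the structure of $N$ from its action on the Bass--Serre tree $T$ of the splitting $\Gamma = G\ast_A H$. The edge stabilisers in $\Gamma$ are the conjugates of $A$; as $N$ is normal, $N\cap \gamma A\gamma^{-1} = \gamma(N\cap A)\gamma^{-1}$, and $N\cap A = \ker(\pi|_A) = 1$ because $\pi|_A$ is injective, so $N$ acts on $T$ with trivial edge stabilisers. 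The vertex stabilisers are the conjugates of $N\cap G = \ker\pi$ (type $G$) and of $N\cap H = 1$ (type $H$, trivial since $\Phi|_H$ is injective). By Bass--Serre theory a group acting on a tree with trivial edge stabilisers is the free product of representatives of its vertex stabilisers with a free group; thus $N$ is a free product of copies of $\ker\pi$ and of a free group. The hypothesis $[H:\pi(A)]\ge 2$, which also forces $[G:A]\ge 2$, guarantees that the splitting is non-degenerate, so that this tree action is genuine.

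It then remains to check that each free factor lies in $\mathcal A$. Free groups are in $\mathcal A$, since $\mathbb Z$ is amenable — hence in $\mathcal A$ — and $\mathcal A$ is closed under free products; so the free factor is harmless and the real content is the membership $\ker\pi\in\mathcal A$. When $G$ is amenable this is automatic: $\ker\pi$ is then amenable, and a countable amenable group acts on itself amenably, transitively and faithfully, so $\ker\pi\in\mathcal A$; for the double one even has $\ker\pi = 1$ and $N$ is free. Granting $\ker\pi\in\mathcal A$, closure of $\mathcal A$ under free products gives $N\in\mathcal A$, and the co-amenable extension step concludes. I expect the crux to be exactly this last part — the Bass--Serre identification of $N$ together with securing $\ker\pi\in\mathcal A$ — since it is there that control on the kernel is needed (the kernel is trivial for the double and amenable when $G$ is amenable), whereas the construction of $\Phi$ and the co-amenability of $N$ are formal.
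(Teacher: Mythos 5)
Your setup coincides with the paper's: the retraction $\Phi$ onto $H$, the co-amenability of $N=\ker\Phi$, and the Bass--Serre decomposition of $N$ as a free product of a free group with conjugates of $N\cap G=\ker\pi$ are all exactly as in the printed proof. But there is a genuine gap at the final step. The proposition makes \emph{no} assumption that $\ker\pi\in\mathcal{A}$ (nor that $G$ is amenable); the only hypotheses on $G$ are the existence of the epimorphism $\pi$ and the conditions on $A$. Your argument concludes only ``granting $\ker\pi\in\mathcal{A}$'', i.e.\ you prove a strictly weaker statement. For instance, take $G=\Lambda\times H$ with $\Lambda$ an arbitrary countable group not in $\mathcal{A}$ and $\pi$ the projection: all hypotheses can be met, $\ker\pi\cong\Lambda\notin\mathcal{A}$, and closure of $\mathcal{A}$ under free products (which needs \emph{both} factors in $\mathcal{A}$) gives you nothing.

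The missing idea is twofold. First, the hypothesis $[H:\pi(A)]\geq 2$ is not there merely to make the splitting non-degenerate: it is used to show that the \emph{free part} $K$ of the decomposition $N\simeq K\ast(\ast_i H_i)$ is non-trivial, by exhibiting a circuit of length $2$ in the quotient graph $N\backslash X$ of the Bass--Serre tree (concretely: pick $z\in H\setminus\pi(A)$, $x\in G\setminus A$ with $\pi(x)=z$, and $h=x\,\pi(x^{-1})\in N$; then $x\notin A$ and $x^{-1}h=z^{-1}\notin A$, so $hH=xH$ while $hA\neq xA$). Second, once $K\neq\{1\}$, one invokes Theorem 1.5 of Glasner--Monod, which says that a \emph{non-trivial free product with at least one free factor} is in $\mathcal{A}$ \emph{with no condition whatsoever on the other factors}. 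This is strictly stronger than closure under free products and is precisely what lets the proof dispense with any hypothesis on $\ker\pi$. Your write-up uses neither of these two points, and without them the argument does not reach the stated generality.
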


\begin{proof}
Let $\psi: G\ast_A H\rightarrow H $ be a homomorphism defined by $\psi(g)=\pi(g)$, and $\psi(h)=h$ for all $g\in G$ and $h\in H$. Let $\Ker(\psi)=N$. Let $X$ be the Bass-Serre tree of $G \ast_A H$ and $Y=N\setminus X$ be the quotient graph. Since $N$ intersects $A$ trivially, by the theory of Bass-Serre, $N$ is isomorphic to the free product
$$ N \simeq K\ast (\ast_i H_i),$$
where $K$ is a free group isomorphic to the fundamental group $\pi_1(Y,T)$ of the graph $Y$ relatively to the maximal tree $T$ which is generated by $g_y$ with $y\in O - T$ where $O$ is an orientation of $Y$, and $H_i$ is the intersection of $N$ with a conjugate of some factor of $G\ast_A H$ (cf. Remarque in p. 61 in \cite{serre}). Thus in order to have $K\neq \{1\}$, it suffices that the quotient graph $Y=N\setminus X$ is not a tree. If there exist $x\in G$ and $h\in N$ such that $hH=xH$ and $hA\neq xA$ (see Figure \ref{NotQuotientTree}), then the quotient graph $Y$ will have a circuit of length 2.

\begin{figure}
\centering
\includegraphics[width=3cm]{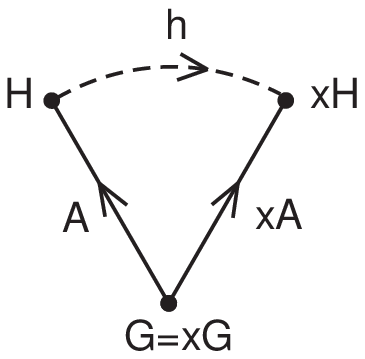}
\caption{}
\label{NotQuotientTree}
\end{figure}

Thus, it is sufficient to find $x\in G$ and $h\in N$ such that $x\in G\setminus A$ and $x^{-1}h\in H\setminus A$.

By assumption on the index of $\pi(A)$ in $H$, there exists $z\in H\setminus \pi(A)$ so that $z^{-1}\in H\setminus \pi(A)$. Let $x\in G\setminus A$ such that $\pi(x)=z$. Let $h=x\pi(x^{-1})$. Then $\psi(h)=\psi(x)\psi(\pi(x^{-1}))=\pi(x)\pi(x^{-1})=1$, so $h\in N$, and we have
$$x^{-1}h=x^{-1}x\pi(x^{-1})=\pi(x^{-1})=z^{-1}\in H\setminus A.$$

Therefore, the normal subgroup $N$ is a free product $K\ast (\ast_i H_i)$ with $K\neq \{1\}$, and the quotient is the amenable group $H$. Recall that a non-trivial free product $G_1\ast G_2$, where $G_i$ is a free group for some $i\in \{1,2\}$, is always in $\mathcal{A}$ from Theorem 1.5 in \cite{GlMo}. Recall also that the class $\mathcal{A}$ is closed under extension of co-amenable subgroups. It follows that $N$ is co-amenable in $G\ast_A H$ and is in $\mathcal{A}$, thus $G \ast_A H$ is in $\mathcal{A}$. For the second statement, take $\pi=\textrm{Id}_G$.
\end{proof}

\subsection{Amalgamated free products over a finite subgroup}

\begin{defn}\label{defA'}
Let $G$, $H$ be two countable groups and let $A$ be a common finite subgroup of $G$ and $H$. We say that the triple $(G,H,A)$ is in the class $\mathcal{A}'$ if there exist a $G$-action on $X$ and a $H$-action on $Y$ such that

\begin{enumerate}
\item[(i)] the action $G \curvearrowright X$ is transitive;
\item[(ii)] for every element $g$ of $G\setminus A$, and $h$ of $H\setminus A$, the sets
$$\supp_A(g)=\{x\in X| Ax\cap gAx=\emptyset\}$$
$$\supp_A(h)=\{x\in Y| Ax\cap hAx=\emptyset\}$$
are infinite;
\item[(iii)] there exist F\o lner sequences $\{C_n\}_{n\geq 1}$ of $G \curvearrowright X$ and $\{D_n\}_{n\geq 1}$ of $H \curvearrowright Y$ such that

(iii)-1. $|C_n|=|D_n|$, $\forall n\geq 1$;

(iii)-2. the sets $\{A\cdot C_n\}_{n\geq 1}$, $\{A\cdot D_n\}_{n\geq 1}$ are pairwise disjoint;

\item[(iv)] the action of $A$ on $X$ and $Y$ are free.
\end{enumerate}
\end{defn}
Note that if $(G,H,A)\in \mathcal{A}'$ then $G\in \mathcal{A}$.

\begin{prop}\label{A'}
If $(G,H,A)\in \mathcal{A}'$ then $G\ast_A H \in \mathcal{A}$.
\end{prop}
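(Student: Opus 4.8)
The plan is to realize the required action on the set $Z:=X$ itself, using $Y$ only to import an action of $H$. Since $A$ is finite and acts freely on both $X$ and $Y$ by (iv), and both sets are infinite by (ii), each is a free $A$-set with countably many orbits, so there is an $A$-equivariant bijection $\phi\colon X\to Y$. I would transport the $H$-action through $\phi$, letting $h\in H$ act on $X$ by $x\mapsto \phi^{-1}\!\big(h\,\phi(x)\big)$; this makes $\phi$ an $H$-equivariant bijection, and since $\phi$ is $A$-equivariant the transported $H$-action agrees on $A$ with the given $G$-action (for $a\in A$ both send $x$ to $ax$). By the universal property of the amalgamated product, the $G$-action on $X$ and this transported $H$-action then assemble into a single action of $G\ast_A H$ on $Z=X$. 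Transitivity is then immediate from (i), since the subgroup $G$ already acts transitively on $Z=X$, so the whole group does; note that this is exactly why only $G\curvearrowright X$, and not $H\curvearrowright Y$, is required to be transitive.

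For amenability I want one Følner sequence for $G\ast_A H\curvearrowright Z$, and this is where conditions (iii)-1 and (iii)-2 enter: I would choose $\phi$ so as to match the two Følner sequences. Because the saturations $\{A\cdot C_n\}_n$ are pairwise disjoint in $X$ and the $\{A\cdot D_n\}_n$ are pairwise disjoint in $Y$, one can define $\phi$ block by block, sending $A\cdot C_n$ onto $A\cdot D_n$; the equality $|C_n|=|D_n|$ together with the freeness (iv) is meant to guarantee that these blocks carry the same number of $A$-orbits, so that an $A$-equivariant bijection carrying $C_n$ onto $D_n$ (up to a proportionally negligible set of orbits) exists, $\phi$ being chosen arbitrarily on the leftover orbits. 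With such a $\phi$, the set $C_n$ is $G$-Følner by hypothesis and, agreeing with $\phi^{-1}(D_n)$ up to $o(|C_n|)$ points while $\phi$ is $H$-equivariant and $D_n$ is $H$-Følner, it is also $H$-Følner. Finally, since every group element acts as a bijection of $Z$ and hence preserves the cardinality of symmetric differences, almost-invariance of $C_n$ under each element of $G$ and of $H$ telescopes along any fixed word: $|w\cdot C_n\,\triangle\,C_n|\le \sum_{i=1}^k |s_i\cdot C_n\,\triangle\,C_n|$ for $w=s_1\cdots s_k$, so $\{C_n\}$ is a Følner sequence for $G\ast_A H$ and the action is amenable. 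Getting the orbit-counting bookkeeping of this alignment exactly right (in particular handling Følner sets that are not $A$-saturated) is the step I expect to require the most care on the amenability side.

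For faithfulness I must show that no nontrivial element of $G\ast_A H$ acts trivially on $Z$. Each factor already acts faithfully: for $1\ne a\in A$ freeness (iv) moves every point, while for $g\in G\setminus A$ condition (ii) gives (infinitely many, hence at least one) $A$-orbits $O$ with $O\cap gO=\emptyset$, forcing $gx\ne x$, and symmetrically for $h\in H\setminus A$ after transporting $\supp_A(h)$ through $\phi$. The substantive case is a reduced word of length $\ge 2$ alternating between $G\setminus A$ and $H\setminus A$, and here I would run a ping-pong / Britton's-lemma argument on the set of $A$-orbits of $Z$: the support sets of (ii) are $A$-invariant and infinite, so for each syllable they supply orbits that are pushed entirely off themselves, and (iv) rules out backtracking inside $A$; following the $A$-orbit of a suitably chosen point along the word, one checks that it cannot return to its starting orbit, so the word acts nontrivially.

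I expect this last point to be the main obstacle. Amenability reduces, after the Følner-matching bookkeeping, to the clean telescoping estimate above, and transitivity is free; but faithfulness is where the normal form of the amalgamated product has to be faithfully reflected in the dynamics on $Z$, and the precise reason the supports are required to be \emph{infinite} (rather than merely nonempty) is presumably to set up ping-pong data avoiding the finitely many constraints imposed by the syllables of a given word. I would therefore invest the most effort in making the orbit-level ping-pong argument watertight.
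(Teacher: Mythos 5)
Your overall frame --- transport the $H$-action to $X$ through an $A$-equivariant bijection $\phi\colon X\to Y$, get transitivity from (i), and get amenability by matching the two F\o lner sequences --- is the same skeleton as the paper's proof, where your $\phi$ appears as a permutation $\sigma$ of $X$ commuting with $A$. The gap is in faithfulness. You propose to fix one such $\phi$ (chosen only to align the F\o lner blocks) and then prove faithfulness by a ping-pong argument on $A$-orbits. This cannot work: condition (ii) only says that each single syllable moves infinitely many $A$-orbits off themselves; it provides no partition of $X$ into ``tables'' for the two factors, and for a badly chosen $\phi$ the resulting action genuinely fails to be faithful. Concretely, take $G=H$ infinite amenable, $A=\{1\}$, $X=Y=G$ with the left regular action, and $\phi=\mathrm{id}$: conditions (i)--(iv) hold and the F\o lner sequences can be matched, yet the action of $G\ast H$ on $X$ factors through the fold map $G\ast G\to G$ and has a huge free kernel. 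So no orbit-level ping-pong can be extracted from (i)--(iv) for an arbitrary F\o lner-compatible $\phi$; the choice of $\phi$ is the whole difficulty.

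The paper's resolution is precisely to make that choice part of the problem: it works in the Baire space $Z=\{\sigma\in Sym(X)\mid \sigma a=a\sigma,\ \forall a\in A\}$ and shows that the set of $\sigma$ for which $G\ast_A H^{\sigma}$ acts faithfully on $X$ is generic, simultaneously with the set of $\sigma$ satisfying $\sigma(C_{n_k})=D_{n_k}$ along a subsequence. The infinitude of the supports in (ii) is used not to set up ping-pong but to perturb a given $\sigma$ on finitely many fresh $A$-orbits so that a given reduced word $w$ moves a chosen point, proving that each set $\{\sigma\mid w^{\sigma}=\mathrm{Id}_X\}$ is closed with empty interior. To repair your argument you would have to replace ``choose $\phi$, then do ping-pong'' by this genericity (or an equivalent diagonal construction of $\phi$ handling all words at once). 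A secondary looseness: your matching of $C_n$ with $\phi^{-1}(D_n)$ only ``up to $o(|C_n|)$ points'' would need its own bookkeeping, whereas the generic $\sigma$ achieves exact equality along a subsequence, making the F\o lner computation for $H$ immediate.
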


\begin{proof}
Let $X$ be a countable set carrying actions of $G$ and $H$ as in the definition. Since $A$ acts freely in the two actions, conjugating we may assume that the $G$-action and the $H$-action coincide on $A$.

First of all, let us recall that the group $Sym(X)$ of permutations of $X$ endowed with the topology of pointwise convergence\symbolfootnote[3]{A sequence $\alpha_n\in Sym(X)$ converges to $\alpha\in Sym(X)$ if for all finite subset $F$ of $X$, there exists $n_0$ such that $\alpha_n|_F=\alpha|_F$, for all $n\geq n_0$.} is a Baire space. Recall that a subset $Y\subset Sym(X)$ is \textit{meagre} if it is a union of countably many closed subsets with empty interior; and \textit{generic} or \textit{dense $G_{\delta}$} if its complement $Sym(X)\setminus Y$ is meagre. Baire's theorem states that in a complete metric space, the intersection of countably many dense open subsets is dense, in particular not empty. Thus in order to find a permutation $\alpha$ of $X$ having the properties $\{P_i\}_{i\geq 1}$, it is enough to prove that the set
$$\mathcal{U}_i=\{\alpha\in Sym(X)|\textrm{$\alpha$ satisfies the property $P_i$} \}$$
is generic in $Sym(X)$ and take $\alpha\in \cap_{i\geq 1}\mathcal{U}_i$.

\bigskip

Now let $\{C_n\}_{n\geq 1}$ (respectively $\{D_n\}_{n\geq 1}$) be the F\o lner sequence for $G$ (respectively for $H$) satisfying the condition (iii) as in Definition \ref{defA'}. Set $Z=\{\sigma\in Sym(X)$ $| \sigma a=a\sigma$, $\forall a\in A \}$; this is a closed subset in $Sym(X)$ so it is a Baire space. For $\sigma\in Z$, denote by $H^{\sigma}=\sigma^{-1}H\sigma$. By universality, the amalgamated free product $G\ast_A H^{\sigma}$ acts on $X$ by $g\cdot x=gx$ and $h\cdot x=\sigma^{-1}h\sigma x$ for all $g\in G$ and $h\in H$. We shall prove that the sets
$$
\mathcal{O}_1=\{\sigma\in Z | \textrm{ the action } G\ast_A H^{\sigma} \textrm{ on $X$ is faithful }\},
$$
and
$$
\mathcal{O}_2=\{\sigma \in Z| \textrm{ there is a subsequence $\{n_k\}$ of $n$ such that $\sigma (C_{n_k})=D_{n_k}$}\}
$$
are generic in $Z$.

Indeed, for the genericity of $\mathcal{O}_1$, we shall prove that for every non-trivial word $w\in G\ast_A H$, the set
$$\mathcal{V}_w=\{\sigma\in Z | w^{\sigma}=\textrm{Id}_X\}$$
 is closed and of empty interior. It is clear that the set $\mathcal{V}_w$ is closed. To prove that the set $\mathcal{V}_w$ is of empty interior, let us consider the case where $w=ag_nh_n\cdots g_1h_1$ with $a\in A$, $g_i\in G\setminus A$ and $h_i\in H\setminus A$ (the other three cases are similar). The corresponding element of $Sym(X)$ given by the action is $w^{\sigma}=ag_n\sigma^{-1}h_n\sigma \cdots g_1\sigma^{-1}h_1\sigma$. Let $\sigma\in \mathcal{V}_w$. Let $F\subset X$ be a finite subset. Choose $x_0\notin F\cup \sigma(F)$ such that $Ax_0\cap(F\cup\sigma(F))=\emptyset$. Inductively on $1\leq i\leq n$, we choose a new point $x_{4i-3}\in \supp_A(h_i)$ such that $Ax_{4i-3}$ and $h_i Ax_{4i-3}$ are outside of the finite set of all points defined before (this is possible by (ii) in Definition \ref{defA'}). Then we define
$$
\sigma'(ax_{4i-4}):=ax_{4i-3}\textrm{ and } \sigma'(a\sigma^{-1}(x_{4i-3})):=a\sigma(x_{4i-4}),
$$
for all $a\in A$. Then set $x_{4i-2}:=h_ix_{4i-3}$. We choose again a new point $x_{4i-1}\in \supp_A(g_i)$ such that $Ax_{4i-1}$ and $g_iAx_{4i-1}$ are outside of the finite set of all points considered so far. We then define
$$
\sigma'(ax_{4i-1}):=ax_{4i-2} \textrm{ and } \sigma'(a\sigma^{-1}(x_{4i-2})):=a\sigma(x_{4i-1}),
$$
for all $a\in A$. Then set $x_{4i}:=g_ix_{4i-1}$.

Every point $v$ on which $\sigma'$ is defined verifies $\sigma'a(v)=a\sigma'(v)$, $\forall a\in A$. Indeed, let $a$, $a'\in A$. Then,
\begin{enumerate}
\item[$\cdot$] $\sigma'a(a'x_{4i-4}) = \sigma'(aa'x_{4i-4}) = aa' x_{4i-3} = a(a' x_{4i-3}) = a\sigma'(a'x_{4i-4})$;
\item[$\cdot$] $\sigma'a(a'\sigma^{-1}(x_{4i-3})) = \sigma'(aa'\sigma^{-1}(x_{4i-3})) = aa'\sigma(x_{4i-4}) = a(a'\sigma(x_{4i-4})) = a\sigma'(a'\sigma^{-1}(x_{4i-3}))$;
\item[$\cdot$] $\sigma'a(a'x_{4i-1}) = \sigma'(aa' x_{4i-1}) = aa' x_{4i-2} = a(a' x_{4i-2}) = a\sigma'(a'x_{4i-1})$;
\item[$\cdot$] $\sigma'a(a'\sigma^{-1}(x_{4i-2})) = \sigma'(aa'\sigma^{-1}(x_{4i-2})) = aa'\sigma(x_{4i-1}) = a (a'\sigma(x_{4i-1})) = a\sigma'(a'\sigma^{-1}(x_{4i-2}))$.
\end{enumerate}

By construction, the $4n+1$ points obtained by the right subwords of $w^{\sigma'}$ are all distinct and in particular $w^{\sigma'}x_0\neq x_0$. We then define $\sigma'$ to be $\sigma$ on every points except these finite points, so that $\sigma'\in Z\setminus \mathcal{V}_w$ satisfies $\sigma'|_F=\sigma|_F$. This concludes the genericity of $\mathcal{O}_1$.

\bigskip

About the genericity of $\mathcal{O}_2$, let us write $\mathcal{O}_2= \bigcap_{N\in \mathbb{N}} \{\sigma\in Sym(X)|$ there exists $m\geq N$ such that $\sigma(C_m)=D_m\}$. We shall show that for every $N\in \mathbb{N}$, the set $\mathcal{V}_N= \{\sigma \in Sym(X)| \forall m\geq N, \sigma(C_m)\neq D_m\}$ is of empty interior (the closedness is clear). Let $F\subset X$ be a finite subset and $\sigma\in \mathcal{V}_N$. Let $m\geq N$ large enough such that $A\cdot x_i\cap (F\cup \sigma(F))=\emptyset$ and $A\cdot y_i\cap (F\cup \sigma(F))=\emptyset$, for every $x_i\in C_m$ and $y_i\in D_m$, $\forall i$; this is possible by (iii)-2. of Definition \ref{defA'}. By (iv) of Definition \ref{defA'}, we have $|A\cdot x_i|=|A\cdot y_i|$, $\forall i$. We then define
$$
\sigma'(ax_i):=ay_i \textrm{ and } \sigma'(a\sigma^{-1}(y_i)):=a\sigma(x_i),
$$
for every $1\leq i\leq |C_m|=|D_m|$ and $a\in A$. For all other points, we define $\sigma'$ to be equal to $\sigma$ so that $\sigma'\in Z\setminus \mathcal{V}_{N}$ and $\sigma'|_F=\sigma|_F$.

\bigskip
Let $\sigma \in \mathcal{O}_1\cap \mathcal{O}_2$. Let $\{C_{n_k}\}_{k\geq 1}$ be the subsequence of $\{C_n\}_{n\geq 1}$ such that $\sigma(C_{n_k})=D_{n_k}$, $\forall k\geq 1$. The sequence $\{C_{n_k}\}_{k\geq 1}$ is a F\o lner sequence for $G$, and for every $h\in H$, we have

\begin{eqnarray}
\lim_{k\rightarrow \infty}\frac{|C_{n_k} \vartriangle h\cdot C_{n_k}|}{|C_{n_k}|}&=&\lim_{k\rightarrow \infty}\frac{|C_{n_k} \vartriangle \sigma^{-1}h\sigma C_{n_k}|}{|C_{n_k}|}= \lim_{k\rightarrow \infty}\frac{|\sigma C_{n_k} \vartriangle h\sigma C_{n_k}|}{|C_{n_k}|}\nonumber \\
&=& \lim_{k\rightarrow \infty}\frac{|D_{n_k} \vartriangle hD_{n_k}|}{|D_{n_k}|}=0.\nonumber
\end{eqnarray}
Thus the sequence $\{C_{n_k}\}_{k\geq 1}$ is a F\o lner sequence for $G\ast_A H^{\sigma}$, and therefore $G\ast_A H^{\sigma}$ is in $\mathcal{A}$.
\end{proof}

\begin{rem}\label{remA'}
\begin{enumerate}
\item The condition (ii) in Definition \ref{defA'} is trivially satisfied if the $G$-action on $X$ is free (which implies that $G$ is amenable).
\item About (iv) in Definition \ref{defA'}, this condition is used in the proof of the genericity of $\mathcal{O}_1$ and $\mathcal{O}_2$ where, given any two points $x$ and $y$, we needed to have $|Ax|=|Ay|$ in order to define $\sigma'$ such that $\sigma'(Ax)=Ay$.
\end{enumerate}
\end{rem}

The following lemma shows that given two infinite amenable groups, one can always find F\o lner sequences having the same cardinality. Precisely, we have:

\begin{lem}\label{cardFolner}
Let $G$, $H$ be infinite amenable groups. Then there exist F\o lner sequences $\{C_n\}_{n\geq 1}$ of $G$ and $\{D_n\}_{n\geq 1}$ of $H$ such that $|C_n|=|D_n|$, $\forall n\geq 1$.
\end{lem}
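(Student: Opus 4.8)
The plan is to reduce the lemma to a single statement about one infinite amenable group: for every finite subset $F$ and every $\epsilon > 0$, all sufficiently large integers occur as the cardinality of some $(F,\epsilon)$-invariant Følner set (meaning $|gA \,\triangle\, A| < \epsilon |A|$ for all $g \in F$). Granting this, I would fix exhausting sequences of finite subsets $F_1^G \subseteq F_2^G \subseteq \cdots$ with $\bigcup_n F_n^G = G$, and likewise $\{F_n^H\}$ for $H$, and set $\epsilon_n = 1/n$. The reduced statement supplies thresholds $N_n^G$ and $N_n^H$ beyond which every cardinality is realised by an $(F_n^G,\epsilon_n)$-invariant set in $G$, respectively an $(F_n^H,\epsilon_n)$-invariant set in $H$. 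Choosing any integer $N_n \geq \max(N_n^G, N_n^H)$ and picking $C_n \subseteq G$ and $D_n \subseteq H$ of cardinality exactly $N_n$ yields $|C_n| = |D_n|$; and since $\epsilon_n \to 0$ while the $F_n$ exhaust the groups, $\{C_n\}$ and $\{D_n\}$ are genuine Følner sequences. So the whole content sits in the cardinality-realisation statement.

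I would prove that statement with two elementary operations on Følner sets, both exploiting that the group is infinite. First I start from a fixed $(F,\epsilon/2)$-invariant set $A$ of some cardinality $M$, which exists because an infinite amenable group has Følner sets of arbitrarily large cardinality. Using that $G$ is infinite, I can choose $t_1,\ldots,t_k$ so that the right translates $At_1,\ldots,At_k$ are pairwise disjoint (each $t_i$ need only avoid the finite set $\bigcup_{j<i} A^{-1}At_j$). Since left multiplication commutes with right translation, $g(At_i)\,\triangle\,(At_i) = (gA\,\triangle\,A)t_i$, and the diagonal intersections $gAt_i \cap At_i$ lie in the disjoint sets $At_i$, hence are themselves disjoint; this gives $|gA_k \cap A_k| \geq k\,|gA \cap A|$ for the union $A_k = At_1 \sqcup \cdots \sqcup At_k$. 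Combining with $|gB\,\triangle\,B| = 2|B| - 2|gB \cap B|$ shows $|gA_k\,\triangle\,A_k| \leq k\,|gA\,\triangle\,A|$ for every $g \in F$, so $A_k$ is again $(F,\epsilon/2)$-invariant and has cardinality exactly $kM$.

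The second operation fine-tunes the cardinality inside a block. Given a target $N$, write $N = kM + r$ with $0 \leq r < M$, build $A_k$ as above, and adjoin $r$ fresh points (possible as $G$ is infinite) to get a set $B$ with $|B| = N$. Adjoining $r$ points changes each symmetric difference by at most $2r$, so $|gB\,\triangle\,B| \leq (\epsilon/2)|A_k| + 2r$ while $|B| \geq kM$, whence the invariance ratio is at most $\epsilon/2 + 2r/(kM) \leq \epsilon/2 + 2/k$. Thus as soon as $k > 4/\epsilon$, that is as soon as $N$ exceeds a threshold of order $M/\epsilon$, the set $B$ is $(F,\epsilon)$-invariant of cardinality exactly $N$, which is precisely the reduced statement. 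The only delicate point is the bookkeeping of the invariance constants through these two operations; beyond verifying those estimates I expect no real obstacle, since every other ingredient is forced by the infiniteness of the two groups.
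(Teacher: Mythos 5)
Your proposal is correct and rests on the same two ingredients as the paper's proof: Euclidean division of the target cardinality by $|A|$, and the observation that a disjoint union of $k$ right translates of an $(F,\varepsilon)$-F\o lner set is again $(F,\varepsilon)$-F\o lner of exactly $k$ times the size, with the remainder $r<M$ absorbed by a small perturbation once the set is of size at least of order $M/\varepsilon$. The only cosmetic differences are that you add $r$ fresh points instead of deleting $r$ points from the other group's F\o lner set, and that you package the argument as a single-group statement (every sufficiently large integer is realised as the cardinality of an $(F,\varepsilon)$-F\o lner set) before matching the two groups, which is a slightly cleaner organisation of the same idea.
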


\begin{proof}
We shall show that for any $\varepsilon>0$, any finite subset $F\subset G$ and any finite subset $E\subset H$, there exist a finite subset $C'\subset G$ and a finite subset $D'\subset H$ such that $C'$ is $(\varepsilon, F)$-F\o lner and $D'$ is $(\varepsilon, E)$-F\o lner verifying $|C'|=|D'|$.

Recall that a finite subset $A\subset G$ is $(\varepsilon, F)$-F\o lner if
$$|A\vartriangle gA|< \varepsilon|A|, \,\, \forall g\in F.$$
By amenability of $G$, there is a finite subset $C_0\subset G$ which is $(\varepsilon,F)$-F\o lner. Let $\{D_n\}_{n\geq 1}$ be a F\o lner sequence of $H$. Let $n\gg 1$ large enough such that
\begin{enumerate}
\item[(1)] $D_n$ is $(\varepsilon /4, E)$-F\o lner;
\item[(2)] $|D_n|> \lambda |C_0|$, where $\lambda=$ max $\{8/ \varepsilon, 2 \}$.
\end{enumerate}
By Euclidean division, there exist $d$, $r\in \mathbb{N}$ such that $|D_n|=d|C_0|+r$ with $r<|C_0|$. Let $g_1$, $\dots$, $g_d\in G$ such that $\{C_0g_i\}_i$ are pairwise disjoint. We put
$C':=\bigsqcup_{i=1}^d C_0g_i$. Then $C'$ is $(\varepsilon, F)$-F\o lner and $|C'|=d|C_0|$.

Now let $D':=D_n - \{x_1,\dots, x_r\}$ be a subset of $D_n$ obtained from $D_n$ by deleting any $r$ elements of $D_n$. Then $|D'|=|D_n|-r=d|C_0|=|C'|$. We claim that $D'$ is $(\varepsilon,E)$-F\o lner. Indeed, we have
$$
\frac{|D_n|}{|D'|}=\frac{|D_n|}{d|C_0|}=\frac{d|C_0|+r}{d|C_0|}=1+\frac{r}{d|C_0|}<1+\frac{1}{d}\leq 2,
$$
since $r<|C_0|$ and $|D_n|=d|C_0|+r>\lambda|C_0|$ so $d>\lambda-1\geq 1$ by definition of $\lambda$. In addition, we have
$$
\frac{|D'\vartriangle D_n|}{|D_n|}=\frac{r}{|D_n|}<\frac{r}{\lambda|C_0|}<\frac{1}{\lambda}\leq \frac{\varepsilon}{8}.
$$
Therefore,
\begin{eqnarray}
\frac{|D'\vartriangle hD'|}{|D'|}& \leq &\frac{|D_n|}{|D'|}\Big(\frac{|D'\vartriangle D_n|}{|D_n|}+ \frac{|D_n\vartriangle hD_n|}{|D_n|}+\frac{|hD_n\vartriangle hD'|}{|D_n|}\Big) \nonumber \\
&< & 2\big(\frac{\varepsilon}{8}+\frac{\varepsilon}{4}+\frac{\varepsilon}{8}\big)=\varepsilon, \nonumber
\end{eqnarray}
for every $h\in E$.
\end{proof}

\begin{lem}\label{cardFolner2}
If there exist amenable actions of $G$ and $H$, then there exist $G$-action and $H$-action such that the actions admit F\o lner sequences $\{C_n\}_{n\geq 1}$ for the $G$-action and $\{D_n\}_{n\geq 1}$ for the $H$-action with $|C_n|=|D_n|$, $\forall n\geq 1$.
\end{lem}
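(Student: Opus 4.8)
The plan is to run the same tiling argument as in Lemma \ref{cardFolner}, the only genuine extra difficulty being that we are now handed \emph{actions} rather than groups, so the space on which we tile may be too small. Since $G$ and $H$ admit amenable actions, say $G\curvearrowright X$ and $H\curvearrowright Y$, the equivalence with the existence of a F\o lner sequence recalled in the introduction provides F\o lner sequences for these actions; in particular, for every $\varepsilon>0$ and every finite $F\subset G$ (resp.\ $E\subset H$) there is an $(\varepsilon,F)$-F\o lner subset $C_0\subset X$ (resp.\ an $(\varepsilon,E)$-F\o lner subset $D_0\subset Y$), in the sense of Lemma \ref{cardFolner}.

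The obstacle, compared with the group case, is that in Lemma \ref{cardFolner} we inflated the cardinality of a F\o lner set by taking pairwise disjoint right translates $C_0g_i$, which are always available inside an infinite group; for a mere action on $X$ such disjoint translates need not exist (for instance if the orbits are finite, or if $X$ itself is finite). To circumvent this I would replace the given actions by their inflations on countably many disjoint copies: put $\tilde X=X\times\mathbb{N}$ and $\tilde Y=Y\times\mathbb{N}$, with $G$ and $H$ acting on the first coordinate only. Inside $\tilde X$ the sets $C_0\times\{1\},\dots,C_0\times\{k\}$ are automatically pairwise disjoint for every $k$, and their union is again $(\varepsilon,F)$-F\o lner: since $g\cdot(C_0\times\{i\})=gC_0\times\{i\}$, passing to $k$ disjoint copies multiplies both $|C_0\vartriangle gC_0|$ and $|C_0|$ by the same factor $k$, leaving the F\o lner ratio unchanged. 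The same remark applies in $\tilde Y$.

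With this extra room the cardinalities can be matched directly, with no need for the Euclidean division and trimming of Lemma \ref{cardFolner}: given $\varepsilon$, $F$, $E$, pick $C_0\subset X$ that is $(\varepsilon,F)$-F\o lner and $D_0\subset Y$ that is $(\varepsilon,E)$-F\o lner, and set
$$
C' = \bigsqcup_{i=1}^{|D_0|}\bigl(C_0\times\{i\}\bigr)\subset \tilde X, \qquad D' = \bigsqcup_{j=1}^{|C_0|}\bigl(D_0\times\{j\}\bigr)\subset \tilde Y .
$$
By the computation above $C'$ is $(\varepsilon,F)$-F\o lner and $D'$ is $(\varepsilon,E)$-F\o lner, while $|C'|=|C_0|\,|D_0|=|D'|$. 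Letting $\varepsilon=\varepsilon_n\to 0$ and choosing exhausting sequences $F_n\uparrow G$ and $E_n\uparrow H$ of finite subsets, the resulting sets $C_n:=C'$ and $D_n:=D'$ form F\o lner sequences for $G\curvearrowright\tilde X$ and $H\curvearrowright\tilde Y$ with $|C_n|=|D_n|$ for all $n$; these inflated actions are the ones claimed by the lemma.

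I expect the only real obstacle to be the one highlighted above, namely manufacturing enough pairwise disjoint copies of a F\o lner set when one works with an action rather than with a group. Once the actions are inflated to $X\times\mathbb{N}$ and $Y\times\mathbb{N}$ this difficulty disappears, and the equalisation of cardinalities becomes immediate.
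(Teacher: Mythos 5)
Your proposal is correct, and its core idea is exactly the paper's: replace $X$ (and $Y$) by a disjoint union of infinitely many copies so that arbitrarily many pairwise disjoint copies of a F\o lner set become available, then match cardinalities. Where you differ is in the matching step: the paper reuses the mechanism of Lemma \ref{cardFolner} verbatim, i.e.\ Euclidean division $|D_n|=d|C_0|+r$, taking $d$ disjoint copies of $C_0$, and trimming $r$ points from $D_n$ together with the estimate showing the trimmed set is still $(\varepsilon,E)$-F\o lner. Your symmetric choice $C'=|D_0|$ copies of $C_0$ and $D'=|C_0|$ copies of $D_0$, both of cardinality $|C_0|\,|D_0|$, exploits the fact that once \emph{both} actions have been inflated there is no need to trim at all; this removes the division, the remainder bookkeeping, and the $\varepsilon/8+\varepsilon/4+\varepsilon/8$ estimate, at no cost. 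Both arguments prove the same statement; yours is slightly cleaner precisely because the inflation trick is applied on both sides rather than only where translates are unavailable.
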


\begin{proof}
If $G\curvearrowright X$ amenably, we replace $X$ by a disjoint union of infinitely many copies of $X$ and use the same idea as in the proof of Lemma \ref{cardFolner};  if $C_0\in X$ is $(\varepsilon, F)$-F\o lner, we put $C'$ to be $d$ copies of $C_0$ in $d$ disjoint copies of $X$.
\end{proof}

Now we are ready to prove Theorem \ref{thm2}:

\begin{cor}\label{GHA'}
Let $G$ and $H$ be countable groups and $A$ be a common finite subgroup of $G$ and $H$. If $G$ is an infinite amenable group and there is a $H$-set $Y$ such that the $H$-action is amenable and the action of $A$ on $Y$ is free, then $(G,H,A)\in \mathcal{A}'$.
\end{cor}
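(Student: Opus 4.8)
The plan is to exhibit explicit actions $G\curvearrowright X$ and $H\curvearrowright Y'$ and to verify the four conditions of Definition \ref{defA'} one at a time. For the $G$-side I would simply take $X=G$ with the left regular action. This is transitive, so (i) holds, and it is free, so (iv) holds on $X$; moreover, by Remark \ref{remA'}(1) the freeness makes (ii) automatic for every $g\in G\setminus A$. Concretely, $Ax\cap gAx=\emptyset$ for all $x$, since $a_1x=ga_2x$ forces $g=a_1a_2^{-1}\in A$; hence $\supp_A(g)=G$, which is infinite because $G$ is. The amenability of $G$ supplies a F\o lner sequence for this action.

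The $H$-side is where the real work lies, and I expect it to be the main obstacle. The given set $Y$ provides amenability and freeness of $A$, but nothing forces $\supp_A(h)$ to be infinite; indeed the $H$-action on $Y$ need not be faithful, so some $h\neq 1$ could act trivially and give $\supp_A(h)=\emptyset$. The idea is to decouple the two requirements: amenability will come from $Y$, while condition (ii) will be supplied by the left regular $H$-set, on which the $A$-support is automatically everything. Concretely I would set
$$ Y':=\Big(\bigsqcup_{j\geq 1}Y_j\Big)\sqcup\Big(\bigsqcup_{k\geq 1}H_k\Big), $$
a countable $H$-set in which each $Y_j$ is a copy of $Y$ and each $H_k$ is a copy of the left regular $H$-set. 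Then $A$ acts freely on $Y'$ (freely on each $Y_j$ by hypothesis, and on each $H_k$ by left multiplication), giving (iv). A F\o lner sequence for $H\curvearrowright Y$ sitting inside a single copy $Y_1$ remains asymptotically invariant in $Y'$ (all translates stay inside $Y_1$), so $H\curvearrowright Y'$ is amenable \emph{without} any amenability hypothesis on $H$. Finally, for $h\in H\setminus A$ the same group computation as above shows $\supp_A(h)\supseteq\bigsqcup_k H_k$, which is infinite, so (ii) holds for $H$.

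It remains to arrange (iii): F\o lner sequences $\{C_n\}$ for $G\curvearrowright X$ and $\{D_n\}$ for $H\curvearrowright Y'$ with $|C_n|=|D_n|$ and with $\{A\cdot C_n\}_n$ and $\{A\cdot D_n\}_n$ each pairwise disjoint. Here I would reuse the mechanism of Lemmas \ref{cardFolner} and \ref{cardFolner2}. Given parameters $(\varepsilon,F,E)$, I pick an $(\varepsilon,F)$-F\o lner set $C_0\subseteq G$ and a large $(\varepsilon/4,E)$-F\o lner set $D_0$ inside one copy $Y_j$ with $|D_0|>\lambda|C_0|$; Euclidean division $|D_0|=d|C_0|+r$ then lets me replace $C_0$ by a disjoint union $\bigsqcup_{i=1}^d C_0g_i$ of $d$ right translates (still F\o lner, since right translation commutes with the left action) and delete $r$ points from $D_0$, producing sets of equal cardinality $d|C_0|$ that remain F\o lner by the estimate in the proof of Lemma \ref{cardFolner}. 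Letting $\varepsilon\to 0$ and exhausting $G$ and $H$ by $F$ and $E$ yields two F\o lner sequences with $|C_n|=|D_n|$, which is (iii)-1. For (iii)-2 the disjointness comes essentially for free: I place $D_n$ in pairwise distinct copies $Y_{j_n}$, so the sets $A\cdot D_n$ live in different copies and are automatically disjoint, and I choose the right translates defining $C_n$ far out in $G$ so that each $A\cdot C_{n+1}$ avoids the finite set $A\cdot C_1\cup\cdots\cup A\cdot C_n$ (possible since $A$ is finite and $G$ infinite, using $A(Cg)=(AC)g$). This verifies (i)--(iv), so $(G,H,A)\in\mathcal{A}'$.
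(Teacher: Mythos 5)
Your construction of the two actions is essentially the paper's: $X=G$ with the regular action, and an $H$-set obtained by adjoining copies of the regular $H$-set to infinitely many copies of $Y$ (the paper uses $Y'=H\sqcup\bigl(\bigsqcup_{i\geq 1}Y_i\bigr)$). Conditions (i), (ii) and (iv) are verified correctly, and your handling of (iii)-2 is the same translation/separate-copies trick as in the paper. But there is a genuine gap in your verification of (iii)-1: you require ``a large $(\varepsilon/4,E)$-F\o lner set $D_0$ inside one copy $Y_j$ with $|D_0|>\lambda|C_0|$''. No such set need exist. The hypothesis only gives an amenable $H$-action on $Y$, and a single copy of $Y$ may admit only F\o lner sets of bounded size --- indeed $Y$ may be finite, as in the key application $Y=H/N$ with $N$ of finite index (Corollary \ref{AmenResid}). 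On the other side, $|C_0|$ is forced to tend to infinity: for the regular action of the infinite group $G$, an $(\varepsilon,F)$-F\o lner set $C$ with $\varepsilon<2$ satisfies $F\subseteq CC^{-1}$, so bounded F\o lner sets cannot handle an exhaustion of $G$. Hence for small $\varepsilon$ and large $F$ the inequality $|D_0|>\lambda|C_0|$ is unattainable inside one copy of $Y$, and your Euclidean-division step cannot be carried out.

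The fix is to orient the argument of Lemma \ref{cardFolner} the other way around, which is exactly what the paper's appeal to Lemma \ref{cardFolner2} accomplishes: make the \emph{$Y$-side} large by taking $D'$ to be a disjoint union of many copies of a fixed F\o lner set $D_0$ spread over fresh copies of $Y$ (a disjoint union of $(\varepsilon,E)$-F\o lner sets is again $(\varepsilon,E)$-F\o lner, with no amenability assumption on $H$), and then trim the small remainder $r<|D_0|$ from a large F\o lner set $C_0\subset G$, which exists because $G$ is an infinite amenable group. Equivalently, you could keep your version of the division provided you allow $D_0$ to span several copies $Y_j$ rather than a single one; the pairwise disjointness in (iii)-2 is then arranged by using distinct blocks of copies for distinct $n$. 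With that one correction your proof coincides with the paper's.
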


\begin{proof}
Let $\{C_n\}_{n\geq 1}$ be a F\o lner sequence of $G$. First of all, we can suppose that the sequence $\{A\cdot C_n\}_{n\geq 1}$ is pairwise disjoint. Indeed, if $\{C_n\}_{n\geq 1}$ is a F\o lner sequence of $G$, we define $\{C'_n\}_{n\geq 1}$ inductively on $n$; let $C'_1:=C_1$ and for every $n\geq 1$, we choose $h_n\in G$ such that $AC_{n+1}h_n\cap (\cup_{i=1}^n AC'_i)=\emptyset$, and we set $C'_{n+1}:=C_{n+1}h_n$. Moreover, if $\{D_n\}_{n\geq 1}\subset Y$ is a F\o lner sequence for $H$-action, we can also suppose that the sequence $\{A\cdot D_n\}_{n\geq 1}$ is pairwise disjoint. Indeed, let $Y_0=\sqcup_{i\geq 1} Y_i $ where $Y_i=Y$, $\forall i\geq 1$ be a disjoint union of infinitely many copies of $Y$. Then the $H$-action on $Y_0$ is amenable with a F\o lner sequence $D_n \subset Y_n$, $\forall n\geq 1$ such that $\{A\cdot D_n\}_{n\geq 1}$ is pairwise disjoint. In addition, by Lemma \ref{cardFolner2} we can suppose that $|C_n|=|D_n|$, $\forall n\geq 1$, so that the condition (iii) in Definition \ref{defA'} is verified.

Now we consider the $H$-action on $Y':= H\sqcup Y_0$. The action satisfies the condition (ii) of Definition \ref{defA'} (for the $H$-action) since the $H$-action on itself is free, and the action of $A$ on $Y'$ is free (since the $A$-action on $H$ is clearly free and the $A$-action on $Y_0$ is free by assumption). Finally let $G=X$. The $G$-action on itself is transitive and free. Thus the triple $(G,H,A)$ is in $\mathcal{A}'$.
\end{proof}

\begin{cor}\label{AmenResid}
If $G$ is infinite amenable group and $H$ contains a finite index normal subgroup $N$ with $N\cap A=\{1\}$, then $(G,H,A)\in \mathcal{A}'$.
\end{cor}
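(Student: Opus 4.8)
The plan is to reduce directly to Corollary \ref{GHA'}. Since $G$ is already assumed to be an infinite amenable group, all that is missing is an $H$-set $Y$ on which the $H$-action is amenable and the $A$-action is free; once such a $Y$ is produced, the conclusion $(G,H,A)\in\mathcal{A}'$ is immediate from Corollary \ref{GHA'}.

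The natural candidate is $Y:=H/N$, with $H$ acting by left translation. Because $N$ has finite index, $Y$ is a finite set, and the action of $H$ on a finite set is always amenable: the uniform probability measure on $H/N$ is $H$-invariant, or equivalently the constant sequence $D_n:=H/N$ is a F\o lner sequence since $|D_n\vartriangle hD_n|=0$ for every $h\in H$. So the amenability requirement is free of charge, and the only substantive point is the freeness of the $A$-action.

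This is exactly where the two hypotheses $A\cap N=\{1\}$ and the normality of $N$ enter. For $a\in A$ and a coset $gN\in H/N$, the element $a$ fixes $gN$ if and only if $agN=gN$, that is, $g^{-1}ag\in N$; by normality of $N$ this is equivalent to $a\in gNg^{-1}=N$, hence to $a\in A\cap N=\{1\}$. Thus every nontrivial $a\in A$ acts without fixed points, so the $A$-action on $Y$ is free, and all hypotheses of Corollary \ref{GHA'} are met.

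I do not expect a genuine obstacle here: the argument is essentially a matter of packaging the finite quotient $H/N$ as the auxiliary $H$-set and invoking the previous corollary. The one step that needs to be written with care is the freeness verification, where normality is used to convert the conjugation condition $g^{-1}ag\in N$ into the membership condition $a\in N$, after which $A\cap N=\{1\}$ finishes the job.
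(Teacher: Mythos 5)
Your proposal is correct and follows exactly the paper's route: the paper proves this corollary by the single remark that it ``follows from Corollary \ref{GHA'} by taking $Y=H/N$.'' You have merely spelled out the two verifications (amenability of the $H$-action on the finite set $H/N$, and freeness of the $A$-action via normality of $N$ and $N\cap A=\{1\}$) that the paper leaves implicit, and both are done correctly.
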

It follows from Corollary \ref{GHA'} by taking $Y=H/N$. So for example if $H$ is residually finite, applying Proposition \ref{A'} we see that $G\ast_A H$ is in $\mathcal{A}$ for every finite subgroup $A$ of $G$ and $H$. Besides, with $A=\{1\}$, we find a particular case of the result of Glasner-Monod; if $G$ is amenable, then $G\ast H\in\mathcal{A}$ for every countable group $H$.

\begin{cor}\label{Amalgam-finite-sg}
Let $G$, $H$ be amenable groups and let $A$ be a common finite subgroup of $G$ and $H$. Then the amalgamated free product $G\ast_A H$ is in $\mathcal{A}$.
\end{cor}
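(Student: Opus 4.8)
The plan is to reduce to the machinery already established, namely Corollary \ref{GHA'} and Proposition \ref{A'}, and to dispose of the degenerate case separately. Since $H$ is amenable, its left-regular action on $Y=H$ is amenable, and the finite subgroup $A$ acts on $H$ by left multiplication, which is free. Hence, provided $G$ is infinite, the hypotheses of Corollary \ref{GHA'} are satisfied with this choice of $Y$, yielding $(G,H,A)\in\mathcal{A}'$, and Proposition \ref{A'} then gives $G\ast_A H\in\mathcal{A}$. If instead $G$ is finite while $H$ is infinite, I would exploit the canonical isomorphism $G\ast_A H\simeq H\ast_A G$ and run the same argument with the roles of $G$ and $H$ exchanged (taking $Y=G$). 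Thus the only remaining situation is that in which $G$ and $H$ are \emph{both} finite.

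When both factors are finite, $A$ is finite and $G\ast_A H$ is the fundamental group of a finite graph of finite groups; equivalently, it acts on its Bass--Serre tree with finite vertex and edge stabilizers, so it is a finitely generated virtually free group. First I would record the elementary observation that every countable amenable group lies in $\mathcal{A}$, because its left-regular action on itself is transitive, faithful, and amenable. Consequently, if $G\ast_A H$ happens to be amenable (as occurs, for instance, when $[G:A]=[H:A]=2$, the group then being virtually cyclic), we are done at once.

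Otherwise $G\ast_A H$ is a non-amenable virtually free group and therefore contains a free subgroup $F$ of finite index of rank at least $2$. Passing to the normal core of $F$ produces a finite-index \emph{normal} subgroup $N$ which is free by the Nielsen--Schreier theorem and which is nonabelian, since a finite-index cyclic or trivial normal subgroup would force $G\ast_A H$ to be virtually cyclic, hence amenable, contrary to assumption. Such an $N$ belongs to $\mathcal{A}$ (nonabelian free groups are in $\mathcal{A}$ by Theorem 1.5 in \cite{GlMo}) and is co-amenable in $G\ast_A H$ because it has finite index, so the quotient action on the finite set $(G\ast_A H)/N$ is amenable. Closure of $\mathcal{A}$ under extensions of co-amenable normal subgroups (Proposition 1.7 in \cite{GlMo}) then concludes this case.

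I expect the main obstacle to be precisely this all-finite case: there is no infinite amenable factor to feed into Corollary \ref{GHA'}, so one must leave the $\mathcal{A}'$ framework and argue through the tree structure. The delicate points are isolating the virtually cyclic subcase (where one appeals to amenability directly rather than to a free subgroup) and ensuring the finite-index normal free subgroup obtained from the core is genuinely nonabelian, so that Theorem 1.5 in \cite{GlMo} applies and the co-amenable extension property can be invoked.
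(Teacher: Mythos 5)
Your proposal is correct and follows essentially the same route as the paper: the case where a factor is infinite is handled via Corollary \ref{GHA'} (with $Y=H$ the regular action) and Proposition \ref{A'}, and the all-finite case via the finite-index free subgroup of $G\ast_A H$ together with closure of $\mathcal{A}$ under co-amenable extensions. Your extra care in the finite case (passing to the normal core and isolating the virtually cyclic subcase) is sound but not needed, since free groups of every rank lie in $\mathcal{A}$ and the paper's cited closure property does not require normality.
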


\begin{proof}
The cases where $G$ or $H$ is infinite follow from Corollary \ref{GHA'} and Proposition \ref{A'}. So let $G$ and $H$ be finite groups. From a result of Baumslag \cite{Baum}, if $G$ and $H$ are finite groups, then the amalgamated free product $G\ast_A H$ contains a free subgroup of finite index. So $G\ast_A H$ is in $\mathcal{A}$ since $\mathcal{A}$ is closed under the extension of co-amenable subgroup.
\end{proof}

\begin{rem}
When $G$ is a finitely generated group with polynomial growth, Lemma \ref{cardFolner} can be strengthened. Indeed, in this case $G$ admits F\o lner sequences of any prescribed size. More precisely, let $\{a_n\}_{n\geq 1}$ be a strictly ascending sequence of positive integers. Let $G$ be an infinite finitely generated group with polynomial growth. Then $G$ has a F\o lner sequence $\{F_n\}_{n\geq 1}$ such that $|F_n|=a_n$, $\forall n\geq 1$.
\end{rem}

\begin{proof}
Let $S$ be a finite symmetric generating set of $G$. Denote $B(k)$ the ball of radius $k$ centered at 1 in the Cayley graph $\mathcal{G}(G,S)$. Let $k_n$ such that $|B(k_n)|\leq a_n<|B(k_n+1)|$. We choose a finite subset $K_n$ such that $K_n\cap B(k_n)=\emptyset$ and $|K_n|=a_n-|B(k_n)|$, and set $F_n:=B(k_n)\cup K_n$, $\forall n\geq 1$. Recall that the boundary $\partial A$ of $A$ is the set of edges $(s,t)$ such that $s\in A$ and $t\notin A$. We have
\begin{eqnarray}
\frac{|\partial F_n|}{|F_n|} & \leq & \frac{|\partial B(k_n)|}{|F_n|}+\frac{|\partial K_n|}{|F_n|}\leq \frac{|\partial B(k_n)|}{|B(k_n)|}+\frac{|\partial K_n|}{|B(k_n)|} \nonumber \\
&\leq & \frac{|\partial B(k_n)|}{|B(k_n)|} +|S|\frac{|K_n|}{|B(k_n)|}\leq \frac{|\partial B(k_n)|}{|B(k_n)|}+|S|\frac{|B(k_n+1)|-|B(k_n)|}{|B(k_n)|} \nonumber \\
&\leq & \frac{|\partial B(k_n)|}{|B(k_n)|}+|S|\frac{|\partial B(k_n)|}{|B(k_n)|} \leq (1+|S|)\frac{|\partial B(k_n)|}{|B(k_n)|}. \nonumber
\end{eqnarray}
By a result of Pansu \cite{Pansu}, a group with polynomial growth with degree $d$ satisfies $\frac{|B(k)|}{k^d}\xrightarrow[k \rightarrow \infty]{}C$, for some $C>0$. Thus in such a group, the sequence of all balls is a F\o lner sequence. This applies to our case and thus $|\partial F_n|/|F_n|\xrightarrow[k \rightarrow \infty]{}0$.
\end{proof}

\subsection{Central extensions}

From the idea of Lemma 7.3.1 in \cite{CCJJV}, we have:
\begin{lem}\label{central}
Let $1\rightarrow Z \rightarrow G \xrightarrow[]{p} Q\rightarrow 1$ be a central extension. Suppose that there is a co-amenable subgroup $H<Q$ such that $H\in \mathcal{A}$ (so $Q$ is also in $\mathcal{A}$) and the central extension splits over $H$ (i.e. the central extension $1\rightarrow Z \rightarrow p^{-1}(H) \xrightarrow[]{p} H\rightarrow 1$ splits). Then $G\in \mathcal{A}$.
\end{lem}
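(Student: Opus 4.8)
The plan is to locate inside $G$ a subgroup that both lies in $\mathcal{A}$ and is co-amenable, and then to invoke the stability of $\mathcal{A}$ under extensions of co-amenable subgroups (Proposition 1.7 in \cite{GlMo}). The natural candidate is $L:=p^{-1}(H)$, and the whole argument splits into showing $L\in\mathcal{A}$ and showing $L$ is co-amenable in $G$.

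First I would unravel the structure of $L$ using the splitting. Let $s:H\to L$ be a section with $p\circ s=\textrm{Id}_H$, and set $K:=s(H)$. Then $p|_K$ is the inverse of $s$, hence an isomorphism onto $H$, so $K\cong H$ and in particular $K\cap Z=\{1\}$; moreover any $g\in L$ factors as $g=\bigl(g\,s(p(g))^{-1}\bigr)\,s(p(g))$ with first factor in $Z=\ker p$, so $L=ZK$. Since $Z$ is central it commutes with $K$, and therefore $L=Z\times K\cong Z\times H$ is an internal direct product. At this point centrality is used twice: it upgrades the split extension to a genuine direct (not merely semidirect) product, and it forces $Z$ to be abelian, hence amenable, hence a member of $\mathcal{A}$. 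Combining $Z\in\mathcal{A}$ with $K\cong H\in\mathcal{A}$ and the closure of $\mathcal{A}$ under direct products, I would conclude $L\in\mathcal{A}$.

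Next I would check that $L$ is co-amenable in $G$. The assignment $gL\mapsto p(g)H$ is a well-defined $G$-equivariant bijection $G/L\to Q/H$, where $G$ acts on $Q/H$ through $p$: it is well defined and injective because $g^{-1}g'\in L\iff p(g^{-1}g')\in H$, it is surjective since $p$ is onto, and it is equivariant by multiplicativity of $p$. As $H$ is co-amenable in $Q$, there is a $Q$-invariant mean on $Q/H$; transporting it along this bijection produces a mean on $G/L$ which is $G$-invariant, because the $G$-action factors through $p$ and $Q$-invariance is exactly the required condition. Hence $L$ is co-amenable in $G$.

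Finally, with a subgroup $L\in\mathcal{A}$ that is co-amenable in $G$, I would apply the closure of $\mathcal{A}$ under extensions of co-amenable subgroups to obtain $G\in\mathcal{A}$. I do not expect any single step to present a genuine difficulty; the one point deserving care is the identification $L\cong Z\times H$, where centrality of $Z$ (rather than mere normality) must be used to secure a direct product, since a semidirect product would not feed directly into the direct-product closure of $\mathcal{A}$ that the argument relies on.
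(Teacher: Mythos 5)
Your proposal is correct and follows essentially the same route as the paper: identify $p^{-1}(H)\cong Z\times H$ via the splitting and centrality, conclude it is in $\mathcal{A}$ by closure under direct products, transport co-amenability along the equivariant bijection $G/p^{-1}(H)\to Q/H$, and finish with closure under extensions of co-amenable subgroups. You merely spell out in more detail the steps the paper states briefly.
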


\begin{proof}
Since the extension splits over $H$, the group $p^{-1}(H)$ is a semi-direct product of $Z$ and $H$, but $Z$ is central so $p^{-1}(H)$ is indeed isomorphic to the direct product $Z\times H$. Since $Z\in \mathcal{A}$ ($Z$ is amenable) and $H\in \mathcal{A}$ by assumption, the group $p^{-1}(H)$ is in $\mathcal{A}$. Moreover, the map $G/p^{-1}(H)\rightarrow Q/H$ defined by $gp^{-1}(H)\mapsto p(g)H$ is $G$-equivariant and bijective, so the co-amenability of $H$ in $Q$ implies the co-amenability of $p^{-1}(H)$ in $G$, thus $G\in \mathcal{A}$.
\end{proof}

\begin{ex}(Example 7.3.4. in \cite{CCJJV})\label{torus}
For $p$, $q\geq 2$, a \textit{Torus knot group} $\Gamma_{p,q}$ is the group with the presentation $\Gamma_{p,q}=\langle x,y|x^p=y^q\rangle$.
There is a central extension
$$
0\rightarrow\mathbb{Z}\rightarrow \Gamma_{p,q}\rightarrow \mathbb{Z}/p\mathbb{Z}\ast \mathbb{Z}/q\mathbb{Z}\rightarrow 1,
$$
where $\mathbb{Z}\simeq \langle x^p=y^q\rangle$, $\mathbb{Z}/p\mathbb{Z}\simeq \langle x|x^p=1\rangle$ and $\mathbb{Z}/q\mathbb{Z}\simeq \langle y|y^q=1\rangle$.

The free product $\mathbb{Z}/p\mathbb{Z}\ast \mathbb{Z}/q\mathbb{Z}$ has a finite index free subgroup $\mathbb{F}$ over which the central extension splits. Thus the group $\Gamma_{p,q}$ is in $\mathcal{A}$ by Lemma \ref{central}.
\end{ex}

\begin{ex}
Let $M$ be a 3-manifold which is constructed as a fiber bundle over a closed orientable surface with fiber a circle. Such a 3-manifold is an orientable \textit{Seifert fibred space}. There is a central extension
$$
0\rightarrow \mathbb{Z} \rightarrow \pi_1(M)\xrightarrow[]{p} \Gamma_g \rightarrow 1,
$$
where $\Gamma_g$ is the fundamental group of the closed orientable surface of genus $g\geq 2$. The derived subgroup $\Gamma'_g$ is co-amenable in $\Gamma_g$ and $\Gamma'_g \in \mathcal{A}$ since $\Gamma'_g$ is free (since $[\Gamma_g: \Gamma'_g]$ is infinite). Thus we have a central extension that splits:
$$
1\rightarrow \mathbb{Z}\rightarrow p^{-1}(\Gamma'_g)\xrightarrow[]{p} \Gamma'_g\rightarrow 1.
$$
Therefore $\pi_1(M)$ is in $\mathcal{A}$ by Lemma \ref{central}.
\end{ex}

\bibliographystyle{amsplain}
\bibliography{ThesisBib}

\smallskip

\begin{quote}
Soyoung \textsc{Moon} \\
Institut de Math\'ematiques \\
Universit\'e de Neuch\^atel\\
11, Rue Emile Argand - BP 158\\
2009 Neuch\^atel - Switzerland

\smallskip

E-mail: \url{so.moon@unine.ch}\\
Homepage: \url{http://members.unine.ch/so.moon}
\end{quote}
\end{document}